\declaretheorem[name=Lemma]{lemma}
\title{Implicit Neural Solver for Time-dependent Linear PDEs with Convergence Guarantee}
\author{%
  Suprosanna Shit\\
  Technical University Munich\\
  \texttt{suprosanna.shit@tum.de}\\
  \And
  Abinav R.\\
  Technical University Munich\\
  \texttt{abinav.ravi@tum.de}\\
  \And
  Ivan Ezhov\\
  Technical University Munich\\
  \texttt{ivan.ezhov@tum.de}\\
  \And
  Jana Lipkova\\
  Technical University Munich\\
  \texttt{jana.lipkova@tum.de}\\
  \And
  Marie Piraud\\
  Technical University Munich\\
  \texttt{marie.piraud@tum.de}\\
  \And
  Bjoern Menze\\
  Technical University Munich\\
  \texttt{bjoern.menze@tum.de}
}
\begin{document}
\maketitle
\vspace{-1em}
\begin{abstract}
\vspace{-0.5em}
Fast and accurate solution of time-dependent partial differential equations (PDEs) is of key interest in many research fields including physics, engineering, and biology. Generally, implicit schemes are preferred over the explicit ones for better stability and correctness. The existing implicit schemes are usually iterative and employ a general-purpose solver which may be sub-optimal for a specific class of PDEs. In this paper, we propose a neural solver to learn an optimal iterative scheme for a class of PDEs, in a data-driven fashion. We attain this objective by modifying an iteration of an existing semi-implicit solver using a deep neural network. Further, we prove theoretically that our approach preserves the correctness and convergence guarantees provided by the existing iterative-solvers. We also demonstrate that our model generalizes to a different parameter setting than the one seen during training and achieves faster convergence compared to the semi-implicit schemes.
\end{abstract}
\vspace{-1em}
\section{Introduction}
\vspace{-0.5em}
Time-dependent partial differential equations (PDEs) are an essential mathematical tool to describe numerous physical processes such as heat transfer, wave propagation, quantum transport, and tumor growth. Solving the initial-value problem (IVP) and boundary-value problem (BVP) accurately and efficiently is of primary research interest in computational science. Numerical solution of time-dependant PDEs requires appropriate spatio-temporal discretization. Spatial discretization can be cast as either finite difference method (FDM) or finite element method (FEM) or finite volume method (FVM), whereas temporal discretization relies on either explicit, implicit or semi-implicit methods. Explicit temporal update rules are generally a single or few forward computation steps, while implicit or semi-implicit update rules, such as Crank-Nicolson's scheme, resort to a fixed-point iterative scheme. Small time and spatial resolution facilitate a more accurate solution, however, it increases computational burden at the same time. Moreover, maximally allowed spatio-temporal resolution is not only constrained by the desired accuracy but also limited to numerical stability criteria. Note that implicit and semi-implicit methods offer a relaxed stability constraints (sometimes unconditionally stable) at the expense of an increased computational cost caused by the iterative schemes.

In recent times, deep neural networks \cite{raissi2019physics} have gained significant attention by numerical computation community due to its superior performance in solving forward simulations \cite{magill2018neural} and inverse-problems \cite{ezhov2019neural}. Recent work by Tompson et al. \cite{tompson2017accelerating} shows a data-driven convolutional neural network (CNN) can accelerate fluid simulation compared to traditional numerical schemes. Long et al. \cite{long2017pde} shows that learned differential operators can outperform hand-designed discrete schemes. However, on the contrary to the well understood and theoretically grounded classical methods, the deep learning-based approaches rely mainly on empirical validity. Recently, Hsieh et al. \cite{hsieh2018learning} develop a promising way to learn numerical solver while providing a theoretical convergence guarantee. They demonstrate that a feed-forward CNN, which is trained to mimic a single iteration of a linear solver, can deliver faster solution than the handcrafted solver. Astonishingly for time-dependent PDEs, the temporal update step of the previously proposed neural schemes relies on an explicit forward Euler method and none of them is capable of making use of the advanced implicit and semi-implicit methods. This limitation restricts the general use of neural architectures in solving time-dependent PDEs.

In this paper, we introduce a novel neural solver for time-dependant linear PDEs. Motivated by \cite{hsieh2018learning} we construct a neural iterator from a semi-implicit update rule. We replace a single iteration of the semi-implicit scheme with a learnable parameterized function such that the fixed point of the algorithm is preserved. To leverage the theoretical guarantees we train our network in a supervised manner. Consequently, our method: (i) offers a theoretical convergence guarantee to the correct solution, (ii) converges faster over existing solvers, and (iii) generalizes to a resolutions and parameter settings very different from the ones seen at training time.
\vspace{-0.5em}
\section{Methodology}
\vspace{-0.5em}
In this section, we first introduce a general formulation on the semi-implicit method for time-dependant linear PDEs and subsequently describe our proposed neural solver.
\vspace{-0.5em}
\subsection{Background}
\vspace{-0.5em}
In the following, we consider the general IVP form of time-dependant linear PDE for the variable of interest $u$ within the computation domain $\Omega$, w.r.t. the spatial variable $x$ and temporal variable $t$, subject to Dirichlet boundary condition $b(x,t)$ at the boundary $\Gamma$
\begin{equation}
    \frac{\partial u}{\partial t} = \mathcal{F}(u,x,t;\Theta),~\forall x \in \Omega, \mbox{ s.t. } u(x,t)=b(x,t),~\forall x \in \Gamma \mbox{ and } u_{t_0}=u_0;
\end{equation}
$\mathcal{F}(u,x,t;\Theta)$ is a linear operator and can be discretized as $\sum_{i=1}^{N}\frac{\Theta_i \partial_i}{\delta x^{p_i}} u$, with uniform spatial discretization step $\delta x$ and PDE parameter set $\Theta=\{\Theta_i\}_{i=1:N}$, where $\Theta_i$ is a diagonal matrix comprising the coefficients of the differential operator $\partial_i$ of \textit{order} $p_i$. Denoting $u(x,t)$ as $u_t$ for simplicity, a first order semi-implicit update rule to get $u_{t+\delta t}$ from $u_{t}$ (with time step $\delta t$) is given by
    $\frac{u_{t+\delta t}-u_t}{\delta t} = \epsilon\mathcal{F}(u,x,t+\delta t;\Theta) + (1-\epsilon)\mathcal{F}(u,x,t;\Theta);~[0< \epsilon \leq 1].$
To obtain $u_{t+\delta t}$ one needs to solve the following linear system of equations
\begin{equation}
    \left(I-\delta t \epsilon\sum_{i=1}^{N}\frac{\Theta_i d_i}{\delta x^{p_i}}\right)u_{t+\delta t} = \delta t \epsilon\sum_{i=1}^{N}\frac{\Theta_i (\partial_i-d_i I)}{\delta x^{p_i}} u_{t+\delta t} + c(u_t, \Theta, \delta x, \delta t,\epsilon; \partial) \label{im}
\end{equation}
where $c$ is independent of $u_{t+\delta t}$ and $d_i$ is the central element of the central difference discretization of operator $\partial_i$. Note that for central difference scheme, $\partial_i-d_i I$ is real, zero-diagonal, and either circulant or skew-circulant matrix. One can use an iterative scheme to compute $u_{t+\delta t}$ from an arbitrary initialization $u_{t+\delta t}^0$ on the right-hand-side of Eq. \ref{im}. For simplicity of notation, we refer to $\left(I-\delta t \epsilon\sum_{j=1}^{N}\frac{\Theta_j d_j}{\delta x^{p_j}}\right)^{-1}\frac{\delta t \epsilon\Theta_i}{\delta x^{p_i}}$ as $\Lambda_i$, and, we drop the subscript of $u$ and use a superscript to denote a single iteration at a time $t+\delta t$. We enforce Dirichlet boundary condition using a projection step with a binary boundary mask $G$.
\begin{equation}
u^{m+1}=G\left(\sum_{i=1}^{N}\Lambda_i (\partial_i-d_i I)u^{m} + c\right)+(I-G) b \label{updaterule}
\end{equation}
Eq. \ref{updaterule} can be seen as a linear operator $u^{m+1} = \Psi(u^m)= Tu^m + k$.
We can guarantee the spectral radius of the linear transformer $T$, i.e. $\rho(T)<1$, by appropriately selecting $\delta x, \delta t, \mbox{ and } \epsilon$ [see Appendix \ref{appendixB}], leading to a \textit{fixed-point} algorithm.
\begin{figure}[t!]
\centering
\includegraphics[width=0.98\textwidth]{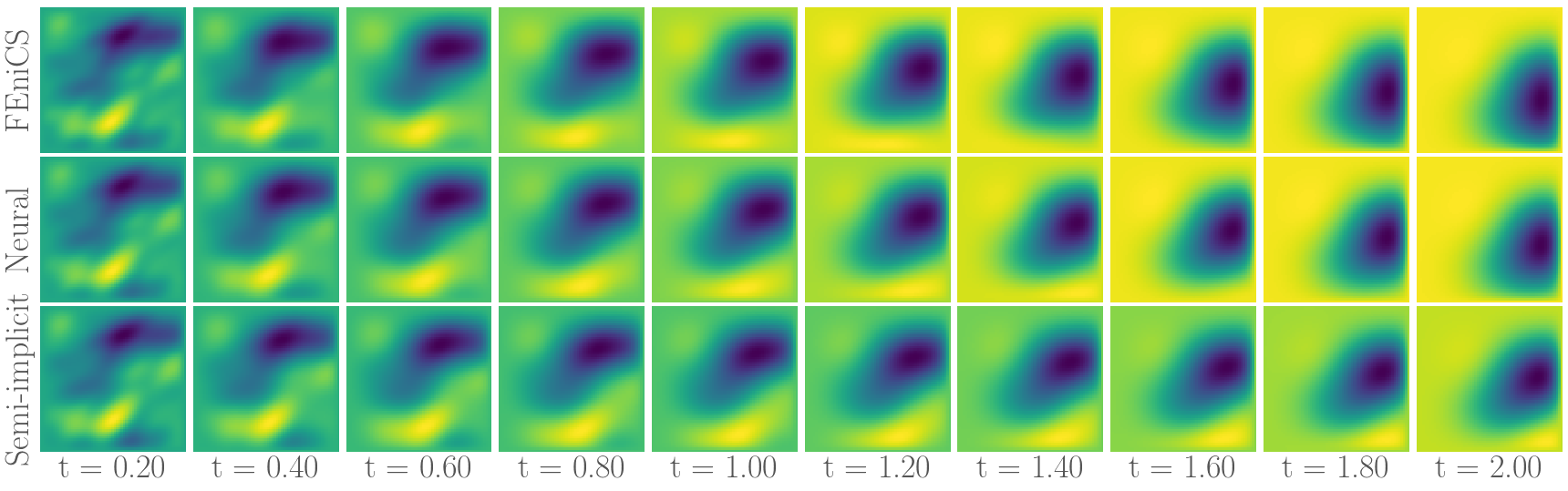}
\caption{\footnotesize Qualitative comparison of $u$ (c.f. Eq \ref{exp}) from the neural scheme (10 iterations) and a semi-implicit scheme (25 iterations) against the FEniCS solution for a test sequence of 10 time points. All methods use same initial- and boundary condition. The neural update shows consistently faster convergence than semi-implicit one.}
\label{fig2}
\vspace{-1em}
\end{figure}
\vspace{-0.5em}
\subsection{Neural Solver}
\vspace{-0.5em}
We propose the following iterator using similar notation as in \cite{hsieh2018learning}
\begin{equation} 
    \Phi_{H}(u) = \Psi(u)+G\left(\sum_{i=1}^{N}\Lambda_i  H_i w\right) \label{iterator}
\end{equation}
where $w =\Psi(u)-u$, and $H_i$ is a learned linear operator which satisfies $H_i 0=0$ for $i=1:N$. Substituting $w$ in Eq. \ref{iterator} we get $\Phi_{H}(u) = T' u+k'$, where $k'$ denotes the additive part which is 
independent  of $u$, and $T'=T+G\sum_{i=1}^{N}\Lambda_i  H_i(T-I)$.
\begin{lemma}
\label{lemma1}
For a linear PDE problem $(\{\Theta_i, \partial_i\}_{i=1:N}, G, u_0, b, \delta x, \delta t, \epsilon)$ and choice of $\{H_i\}_{i=1:N}$ if $u^{*}$ is a fixed point of $\Psi,$ it is a
fixed point of $\Phi_{H}$ in Eq. \ref{iterator}.
\end{lemma}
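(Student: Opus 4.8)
The plan is to verify the fixed-point property by direct substitution, exploiting the fact that the residual $w$ drives the entire learned correction term. Since $\Psi(u) = Tu + k$ is affine and we have arranged $\rho(T)<1$, the map $\Psi$ admits a unique fixed point $u^{*}$ with $\Psi(u^{*}) = u^{*}$; I would take this $u^{*}$ and simply evaluate $\Phi_{H}$ at it, checking that the augmentation in Eq.~\ref{iterator} contributes nothing.

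The key observation is that at $u^{*}$ the residual vanishes: by definition $w = \Psi(u^{*}) - u^{*} = 0$. Feeding this into the correction term gives $G(\sum_{i=1}^{N}\Lambda_i H_i w) = G(\sum_{i=1}^{N}\Lambda_i H_i 0)$, and invoking the hypothesis $H_i 0 = 0$ for each $i$ collapses every summand to zero, so the whole correction reduces to $G\cdot 0 = 0$. Consequently $\Phi_{H}(u^{*}) = \Psi(u^{*}) + 0 = u^{*}$, which is exactly the fixed-point condition for $\Phi_{H}$.

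There is essentially no analytical obstacle here: the statement is almost tautological once one recognizes the correct role of $w$. The only genuinely load-bearing step is the constraint $H_i 0 = 0$, which is precisely the design choice that decouples the learned operators $\{H_i\}_{i=1:N}$ from the correctness of the fixed point; any linear $H_i$ satisfies it automatically, so the network retains full freedom to reshape the convergence behaviour (the modified map $T'$) without ever perturbing $u^{*}$. As a consistency check one can instead expand $\Phi_{H}(u) = T'u + k'$ with $T' = T + G\sum_{i}\Lambda_i H_i(T-I)$ and the induced constant $k'$, and confirm that the $u^{*}$-dependent and constant pieces recombine into $G\sum_{i}\Lambda_i H_i(\Psi(u^{*}) - u^{*}) = 0$; this yields the same conclusion but by a lengthier route, so I would present the direct substitution as the main argument and relegate the $T'$-expansion to at most a remark.
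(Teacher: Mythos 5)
Your proof is correct and follows essentially the same route as the paper's: evaluate $\Phi_{H}$ at $u^{*}$, observe that $w=\Psi(u^{*})-u^{*}=0$, and invoke $H_i 0=0$ so the learned correction vanishes, giving $\Phi_{H}(u^{*})=u^{*}$. The extra appeals to $\rho(T)<1$ and the $T'$-expansion are harmless but unnecessary, since the lemma only assumes $u^{*}$ is a fixed point of $\Psi$.
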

\begin{proof}
If $u^{*}$ satisfies $\Psi\left(u^{*}\right)=u^{*}$ then, $w=\Psi\left(u^{*}\right)-u^{*}=0$ .
Therefore the iterative rule in Eq.\ref{iterator} becomes, $\Phi_{H}\left(u^{*}\right)=\Psi\left(u^{*}\right)+G\sum_{i=1}^{N}\Lambda_i  H_i 0=u^{*} .$
\end{proof}
Additionally, if $H_i=0,~\forall i=1:N$ then $\Phi_{H}=\Psi$. Furthermore,
if $H_i=\partial_i~\forall i=1:N$ , then since $G T=T$
\begin{equation}
    \Phi_{H}(u)=\Psi(u)+G T(\Psi(u)-u)=T \Psi(u)+c=\Psi^{2}(u)
\end{equation}
which is equal to two iterations of $\Psi$. Since computing $\Phi$ requires two separate convolutions: i) $T$, and ii) $\{H_i\}_{i=1:N}$, one iteration of $\Phi_{H}$ computes same order of convolution operations as two iterations of $\Psi$. This shows that we can learn a set of $\{H_i\}_{i=1:N}$ such that our iterator $\Phi_{H}$ performs at least as good as the standard solver $\Psi$.
In the following theorem, we show that there is a convex open set of ${H_i}$ that the learning algorithm can explore.
\begin{restatable}[]{theorem}{thmone}
\label{thm1}
For fixed $G, \{\Theta_i\}_{i=1:N}, u_0, b, \delta x, \delta t$, and $\epsilon$ the spectral norm of $\Phi_{H}(u; G, \{\Theta_i\}_{i=1:N}, u_0, b, \delta x, \delta t, \epsilon)$ is a convex function of $\{H_i\}_{i=1:N}$ and the set of $\{H_i\}_{i=1:N}$ such that the spectral norm of $\Phi_{H}(u)<1$ is a convex open set.
\end{restatable}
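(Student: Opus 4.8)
The plan is to exploit the affine dependence of the linear part $T'$ on the tuple $\{H_i\}_{i=1:N}$ together with the fact that the spectral norm is a genuine norm (hence convex), rather than the spectral radius (which is not). First I would make precise that by the spectral norm of $\Phi_H$ we mean $\|T'\|_2$, the largest singular value of the linear transformer
\begin{equation*}
T' = T + G\sum_{i=1}^{N}\Lambda_i H_i (T-I)
\end{equation*}
appearing in the affine representation $\Phi_H(u) = T'u + k'$ derived above.

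Step one: show that the map $\{H_i\}_{i=1:N} \mapsto T'$ is affine. Each summand $G\Lambda_i H_i (T-I)$ is the composition of left-multiplication by the fixed matrix $G\Lambda_i$ with right-multiplication by the fixed matrix $(T-I)$, applied to $H_i$; this is linear in $H_i$. Adding the constant term $T$, which is independent of every $H_i$, yields an affine map from the product space of the $H_i$ into the space of matrices. Step two: recall that $\|\cdot\|_2$ is a matrix norm, hence convex, since $\|\lambda A + (1-\lambda)B\|_2 \le \lambda\|A\|_2 + (1-\lambda)\|B\|_2$ for $\lambda\in[0,1]$ by absolute homogeneity and the triangle inequality. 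Because the composition of a convex function with an affine map is convex, the map $\{H_i\}_{i=1:N} \mapsto \|T'\|_2$ is convex, which establishes the first claim.

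Step three handles the second claim. The set in question is the strict sublevel set $S = \{\{H_i\}_{i=1:N} : \|T'\|_2 < 1\}$ of the convex function from step two at level $1$; strict sublevel sets of convex functions are convex, so $S$ is convex. Openness then follows from continuity: the affine map $\{H_i\}_{i=1:N}\mapsto T'$ is continuous and $\|\cdot\|_2$ is continuous, so their composition is continuous, and $S$ is the preimage of the open interval $(-\infty,1)\subset\mathbb{R}$ under this continuous map, hence open.

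The argument is essentially routine once the correct object is identified, so I do not expect a genuine obstacle; the only important modeling choice—and the one that makes the whole statement true—is to measure the contraction of $\Phi_H$ by the spectral norm $\|T'\|_2$ (a convex norm) rather than by the spectral radius $\rho(T')$, whose sublevel sets are in general neither convex nor described by a convex function. I would flag explicitly that $\|T'\|_2 < 1$ implies $\rho(T') < 1$, so membership in $S$ still guarantees the fixed-point convergence established earlier for $\Psi$, while additionally furnishing the clean convex geometry that the learning algorithm can safely explore.
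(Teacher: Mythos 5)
Your proof is correct and takes essentially the same approach as the paper's: convexity of the spectral norm combined with the affine (linear) dependence of $T' = T + G\sum_{i=1}^{N}\Lambda_i H_i(T-I)$ on $\{H_i\}_{i=1:N}$ gives the first claim, and openness follows from continuity via the preimage of an open set. If anything, your openness step is cleaner than the paper's—you take the preimage of $(-\infty,1)$, which exactly describes the set in question, whereas the paper invokes the preimage of $(0,1-\zeta)$ for some $\zeta>0$, which strictly speaking is a different (smaller) set.
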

\begin{proof}
\vspace{-0.5em}
See Appendix \ref{appendixA}.
\vspace{-0.5em}
\end{proof}
On a stark contrast with previous work \cite{hsieh2018learning}, we have several sets of parameters $\{\Theta_i\}_{i=1:N}, \delta x, \delta t$, and $\epsilon$ attached to the PDEs governing equation. Although we train on a single parameter settings, the model posses a generalization properties, which we show in the following.
\begin{restatable}[]{proposition}{propone}
\label{prop1}
For fixed \{$\partial_i\}_{i=1:N}, G$, and $\{H_i\}_{i=1:N}$, and some $u'_{0}, b', \{\Theta'_{i}\}_{i=1:N}, \delta x', \delta t'$, and $\epsilon'$, if $\Phi_{H}(u)$ is valid iterator for the PDE problem $(\{\Theta'_{i}, \partial_i\}_{i=1:N}, G, u'_{0}, b', \delta x', \delta t', \epsilon')$, then for all $u_0$ and $b,$ the iterator $\Phi_{H}(u)$ is valid iterator for the PDE problem $(\{\Theta_i, \partial_i\}_{i=1:N}, G, u_0, b, \delta x, \delta t, \epsilon)$ if $\left\|\Lambda_i\right\|< \frac{1}{\sum_{j=1}^{N}\left\|\partial_j-d_j I \right\|},~\forall i=1:N$.
\end{restatable}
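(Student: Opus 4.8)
The plan is to reduce the notion of a \emph{valid iterator} to a quantitative contraction estimate and then transport that estimate across parameter settings. Writing $\Phi_H(u) = T'u + k'$ with $T' = T + G\sum_i \Lambda_i H_i(T-I)$ as in the text, I regard $\Phi_H$ as valid for a given problem exactly when its spectral norm satisfies $\|T'\| < 1$. This is the right target because a strict spectral-norm bound makes $\Phi_H$ a contraction, so by the standard theory of linear stationary iterations the sequence $u^{m+1} = \Phi_H(u^m)$ converges geometrically to the unique fixed point from \emph{any} initialization $u_0$ and for \emph{any} boundary data $b$ (the latter only enters the affine term $k'$). Together with Lemma \ref{lemma1}, which guarantees that this fixed point is the correct solution, establishing $\|T'\| < 1$ for the target (unprimed) parameters is therefore equivalent to the claim, and the whole argument reduces to bounding the spectral norm of the target transfer operator $T'$.

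First I would isolate the two contributions to $T'$. For the base part, $\|T\| = \bigl\|G\sum_i \Lambda_i(\partial_i - d_i I)\bigr\| \le \sum_i \|\Lambda_i\|\,\|\partial_i - d_i I\|$, since the binary boundary mask $G$ is an orthogonal projection and hence $\|G\| \le 1$. The hypothesis $\|\Lambda_i\| < \bigl(\sum_j \|\partial_j - d_j I\|\bigr)^{-1}$ is precisely calibrated so that this sum is strictly below $\frac{\sum_i \|\partial_i - d_i I\|}{\sum_j\|\partial_j - d_j I\|} = 1$; thus $\|T\| < 1$, which also gives invertibility of $I - T$ and the crude estimate $\|T - I\| \le 1 + \|T\| < 2$. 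So the base term behaves exactly like the convergent semi-implicit scheme of Appendix \ref{appendixB}.

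The remaining and genuinely delicate step is to control the learned correction $G\sum_i \Lambda_i H_i(T-I)$ without any norm bound on the $H_i$ being assumed directly. Here I would exploit the validity hypothesis in the source (primed) problem, whose transfer operators I write $T_p$ and $T_p' = T_p + G\sum_i \Lambda_i' H_i(T_p - I)$, with $\|T_p'\| < 1$ by assumption and $\Lambda_i'$ the $\Lambda_i$ formed from the primed parameters. Rearranging the definition of the transfer operator yields the clean factorization $T_p' - I = \bigl(I + G\sum_i \Lambda_i' H_i\bigr)(T_p - I)$. Since the base semi-implicit operator satisfies $\rho(T_p) < 1$, the factor $I - T_p$ is invertible, so $\|T_p' - I\| < 2$ produces a finite norm bound on the aggregate learned operator $G\sum_i \Lambda_i' H_i$. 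This aggregate estimate is the only quantitative information about the $H_i$ that the hypotheses supply, and the task is to convert it into a bound on the target aggregate $G\sum_i \Lambda_i H_i$ and to show that, once weighted by the small factors $\Lambda_i$ permitted by the hypothesis, the correction term cannot push $\|T'\|$ up to $1$.

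I expect this last transfer to be the main obstacle. The hypothesis bounds each $\|\Lambda_i\|$ in the target problem, but the $H_i$ are only known to be admissible for the source problem, where they are entangled with the different weights $\Lambda_i'$; disentangling the individual $H_i$ from the single aggregate estimate and re-weighting them by the target $\Lambda_i$ is where the argument must do real work. Moreover, because the $\|\Lambda_i\|$ bound is already tight for the base term alone, one cannot afford a lossy triangle-inequality treatment of the correction: the slack created by the strict inequality must be shown to dominate it, which I anticipate requires using the shared, parameter-independent structure of $G$ and $\partial_i - d_i I$ rather than norms alone. Once the correction is shown to be subordinate to this slack, $\|T'\| < 1$ follows, and with the reduction of the first paragraph the proposition is proved.
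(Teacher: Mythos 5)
Your reduction of validity to a contraction estimate and your bound $\|T\|\le\sum_i\|\Lambda_i\|\,\|\partial_i-d_iI\|<1$ under the hypothesis are fine, but the step you yourself flag as ``where the argument must do real work'' is a genuine, and in fact fatal, gap. Your plan is to mine the primed validity assumption for quantitative control of the learned operators, via the (correct) factorization $T_p'-I=\bigl(I+G\sum_i\Lambda_i'H_i\bigr)(T_p-I)$. This cannot be made to work: the only information that factorization yields is a bound on the aggregate $G\sum_i\Lambda_i'H_i$, in which each $H_i$ is irreversibly entangled with the primed weight $\Lambda_i'$. Nothing in the hypotheses keeps the $\Lambda_i'$ away from zero (e.g.\ $\Theta_i'=0$ is allowed), and as $\Lambda_i'\to 0$ the primed problem remains valid while the aggregate carries no information whatsoever about the corresponding $H_i$; re-weighting by the target $\Lambda_i$, which may sit near the constraint boundary, is then impossible. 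There is also a secondary issue: to turn $\|T_p'-I\|<2$ into a norm bound on the aggregate you need $\|(I-T_p)^{-1}\|$ controlled, i.e.\ $\|T_p\|<1$, but the primed problem is never assumed to satisfy the $\Lambda$-constraint, so even this preliminary step is not justified by the hypotheses.

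The paper's proof (Lemma \ref{lemma2} in Appendix \ref{appendixA}) never tries to extract bounds on the $H_i$ from the primed problem at all. Instead it regroups the target operator as $T'=G\sum_i\Lambda_i(\partial_i-d_iI-H_i)+G\sum_i(\Lambda_iH_i)T$ and applies exactly the ``lossy'' triangle-inequality treatment you say must be avoided: $\|T'\|<\sum_i\|\Lambda_i\|\bigl(\|\partial_i-d_iI-H_i\|+\|H_i\|\bigr)<\frac{\sum_i(\|\partial_i-d_iI-H_i\|+\|H_i\|)}{\sum_j\|\partial_j-d_jI\|}$, using $\|G\|=1$ and $\|T\|<1$. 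The point is that this bound depends only on $\{\partial_i\}$, $\{H_i\}$, and $G$ --- the objects shared by the primed and unprimed problems --- and not on $\{\Theta_i\},\delta x,\delta t,\epsilon$; combined with $\rho(T')\le\|T'\|$ and Lemma \ref{lemma1}, contractivity certified by this bound is therefore parameter-independent under the $\Lambda$-constraint, which is the entire content of the transfer. Your instinct that the slack in the strict inequality must ``dominate'' the correction term is realized not by sharper estimates but by choosing the decomposition in which $H_i$ appears inside $\partial_i-d_iI-H_i$ and $H_iT$, each weighted by the constrained $\|\Lambda_i\|$. Note also that in the paper's mechanism the primed validity hypothesis plays no quantitative role, which is precisely why your attempt to squeeze estimates out of it leads to a dead end.
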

\begin{proof}
\vspace{-0.5em}
See Appendix \ref{appendixA}.
\vspace{-0.5em}
\end{proof}
\section{Experiments}
\vspace{-0.5em}
We consider a 2-D advection-diffusion equation of the following form
\begin{eqnarray}
    \frac{\partial u}{\partial t} = \mathbf{v}^\top \cdot \begin{bmatrix}
\partial_{x}u\\
\partial_{y}u
\end{bmatrix} + \mathbf{D}^\top \cdot  \begin{bmatrix}
\partial_{xx}u\\
\partial_{yy}u
\end{bmatrix} \mbox{; subject to } u_{t_0} = u_0(x,y) \label{exp}
\end{eqnarray} 
where $\mathbf{v}=[v_x, v_y]^\top$ and $\mathbf{D}=[D_{xx}, D_{yy}]^\top$ are advection velocity and diffusivity respectively. We minimize the following loss function $\mathcal{L}=\frac{1}{L}\sum_{n=1}^{N}\sum_{l=1}^{L}\left\|\Phi^{n,k}_H(u^l_t)-u^l_{t+n\delta t}\right\|; k\sim\mathcal{U}[1,20]$
where $n$ is the number of time-step and $k$ iteration for a single time step is denoted as $$\Phi_H(\Phi_H \dots (\Phi_H)) = \Phi^{k}_H$$

\textbf{Data Generation: }
We consider a rectangular domain of $\Omega = [0,2\pi]\times [0,2\pi]$. Elements of $\mathbf{v}$ and $\mathbf{D}$ are drawn from a uniform distribution of $\mathcal{U}[-2.0,2.0]$ and $\mathcal{U}[0.2,0.8]$ respectively. The computational domain is discretized into 64 x 64 regular mesh. We assume zero Dirichlet boundary condition and the initial value is generated according to \cite{long2017pde} as $ u_0 = \lambda cos(kx+ly) + \gamma sin(kx+ly)$ where $\gamma$ and $ \lambda$  are drawn from a normal distribution of $\mathcal{N}(0,0.02)$, and, $k$ and $l$ are values drawn in random from a uniform distribution of $\mathcal{U}[1,9]$. We generate 200 simulations each having 50 time steps, using FEniCS \cite{logg2012automated}\cite{alnaes2015FEniCS} for $\delta t = 0.2$. Further, we take the train, test, and validation split of the simulated time series as $80\%:10\%:10\%$. A time series of a test data is shown in Fig \ref{fig2}.

\textbf{Implementation Details: }
Following \cite{hsieh2018learning}, we use a three-layer convolutional neural network to model each of the $H_i$. We use zero-padding to enforce zero Dirichlet condition at the boundary and use a kernel size of 3x3. During training, we fixed the following parameters as follows $\delta x=0.098, \delta t=0.2, \epsilon=0.9$. We use the PyTorch framework and train our network with Adam Optimizer \cite{kingma2014adam}.
\subsection{Results and Discussion}
\begin{figure}[t!]
\centering
\includegraphics[width=0.32\textwidth]{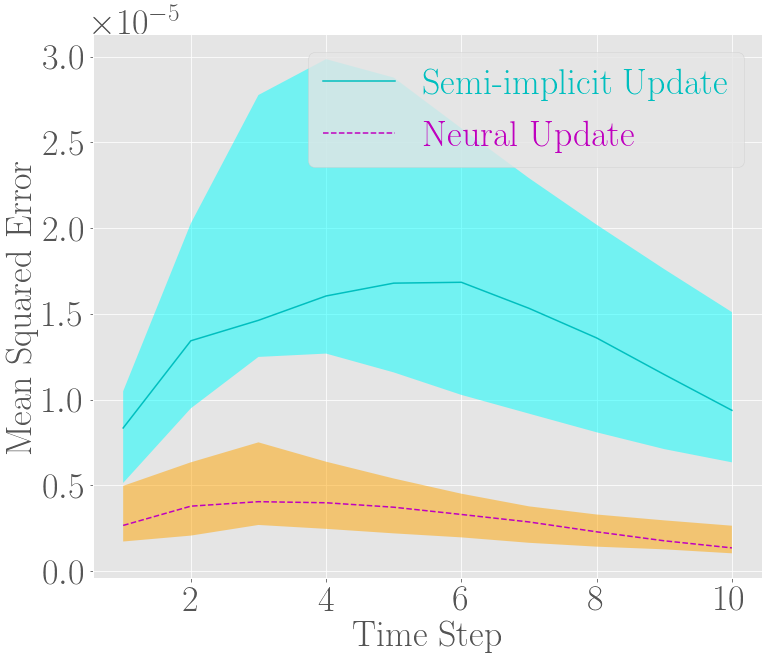}
\includegraphics[width=0.32\textwidth]{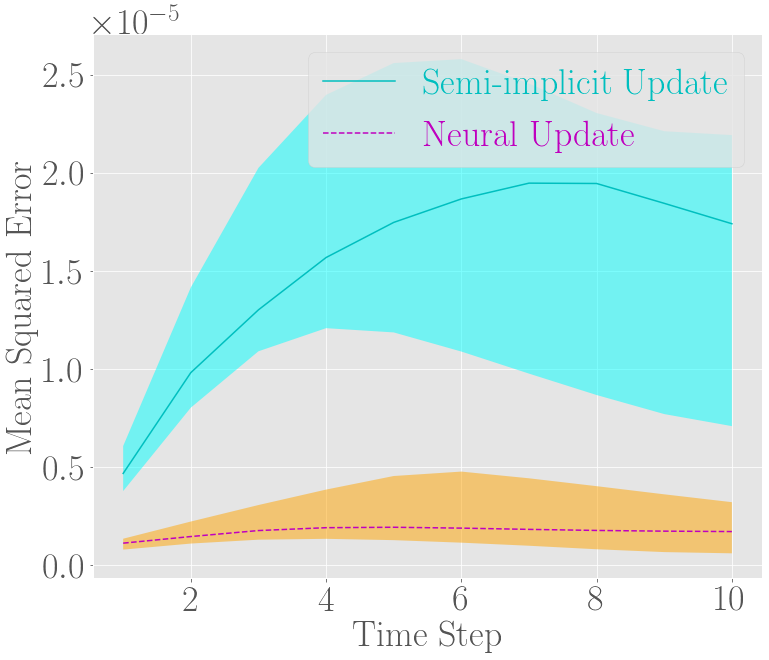}
\includegraphics[width=0.32\textwidth]{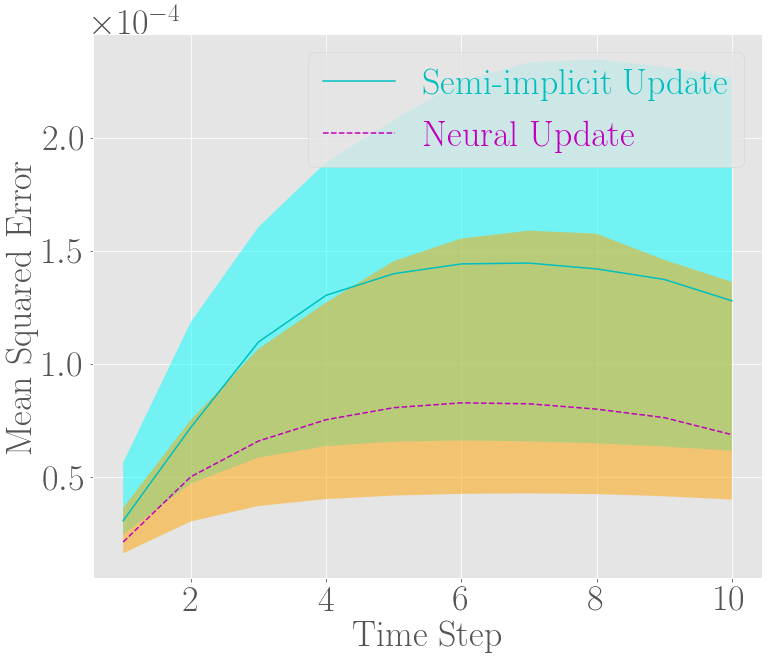} \\
(a) $\epsilon=0.75$ \hspace{8em} (b) $\delta t= 0.12$ \hspace{7em} (c) $\delta x= 0.049$
\caption{\footnotesize (a), (b), and (c) shows the mean-squared error (between FEniCS solution and Semi-implicit scheme and neural scheme) vs time plot for different parameters during test time as specified. The banded curves indicate the 25\% and 75\% percentile of the normalized errors among 20 test samples.}
\label{fig1}
\vspace{-1em}
\end{figure}

\textbf{Generalization for Different Parameters: }
We investigate the effect of different parameter settings than those used during, to validate the generalizability of the neural scheme. To study the effect of different $\epsilon$ we use the original test set. We generate two additional test cases; varying one parameter at a time : a) $\delta t=0.12$, and b) $\delta x =0.049$. The elements of $\mathbf{v}$ and $\mathbf{D}$ are drawn from the same distribution as before. The average error propagation over ten time step between the semi-implicit finite difference method and the proposed neural implicit solver is being compared in Figure \ref{fig1}.

We observe that error from the neural scheme (10 iterations per time step) is less compared to the error from the semi-implicit FDM (25 iterations per time step) scheme for all three different test sets. This affirms our hypothesis that the neural solver is more accurate compared to the semi-implicit FDM and generalizable to other parameter settings at the same time. We interpret that each $H_i$ explores an optimal discretization scheme near the manifold of $\partial_i$ by learning the sum of order rules as described in \cite{dong2017image}[c.f. Appendix \ref{appendixC}].

\textbf{Run-time Comparison: }
We compare the run time for the neural solver (10 iterations per time step) and semi-implicit scheme (25 iterations per time step), for one time-steps. The experiments are conducted on an Intel Xeon W-2123 CPU @ 3.60GHz, with code running on one of the four cores. We report that the trained neural solver takes circa 0.0148s compared to 0.0141s for the semi-implicit scheme, whereas the FEniCS solution takes 3.19s for machine precision convergence.
\vspace{-0.5em}
\section{Conclusion}
\vspace{-0.5em}
This abstract introduces a novel implicit neural scheme to solve linear time-dependent PDEs. We leverage an existing semi-implicit update rule to design a learnable iterator that provides theoretical guarantees. The learned iterator achieves faster convergence compared to the existing semi-implicit solver and produces a more accurate solution for a fixed computation budget. More importantly, we empirically demonstrate that training on a single parameter setting is enough to generalize over other parameter settings which confirms our theoretical results. The learned neural solver can be a faster alternative for simulation of various physical processes.
\newpage
\subsubsection*{Acknowledgments}
Suprosanna Shit and Ivan Ezhov are supported by the Translational Brain Imaging Training Network (TRABIT) under the European Union’s ‘Horizon 2020’ research \& innovation program (Grant agreement ID: 765148).
\bibliographystyle{unsrt}

\bibliography{neurips_2019}

\begin{thebibliography}{10}

\bibitem{raissi2019physics}
M.~Raissi et~al.
\newblock Physics-informed neural networks: A deep learning framework for
  solving forward and inverse problems involving nonlinear partial differential
  equations.
\newblock {\em Journal of Computational Physics}, 378:686 -- 707, 2019.

\bibitem{magill2018neural}
Martin Magill et~al.
\newblock Neural networks trained to solve differential equations learn general
  representations.
\newblock In {\em Advances in Neural Information Processing Systems}, pages
  4071--4081, 2018.

\bibitem{ezhov2019neural}
Ivan Ezhov et~al.
\newblock Neural parameters estimation for brain tumor growth modeling.
\newblock In {\em Proceedings of the International Conference on Medical Image
  Computing and Computer Assisted Intervention}. Springer, 2019.

\bibitem{tompson2017accelerating}
Jonathan Tompson et~al.
\newblock Accelerating {E}ulerian fluid simulation with convolutional networks.
\newblock In {\em Proceedings of the 34th International Conference on Machine
  Learning}, volume~70, pages 3424--3433. PMLR, 2017.

\bibitem{long2017pde}
Zichao Long et~al.
\newblock {PDE}-net: Learning {PDE}s from data.
\newblock In {\em Proceedings of the 35th International Conference on Machine
  Learning}, volume~80, pages 3208--3216. PMLR, 2018.

\bibitem{hsieh2018learning}
Jun-Ting Hsieh et~al.
\newblock Learning neural {PDE} solvers with convergence guarantees.
\newblock In {\em Proceedings of the International Conference on Learning
  Representations}, 2019.

\bibitem{logg2012automated}
Anders Logg et~al.
\newblock {\em Automated solution of differential equations by the finite
  element method: The {FEniCS} book}, volume~84.
\newblock Springer Science \& Business Media, 2012.

\bibitem{alnaes2015FEniCS}
Martin Aln{\ae}s et~al.
\newblock The {FEniCS} project version 1.5.
\newblock {\em Archive of Numerical Software}, 3(100), 2015.

\bibitem{kingma2014adam}
Diederik~P. Kingma and Jimmy Ba.
\newblock Adam: A method for stochastic optimization.
\newblock In {\em Proceedings of the International Conference on Learning
  Representations}, 2014.

\bibitem{dong2017image}
Bin Dong et~al.
\newblock Image restoration: Wavelet frame shrinkage, nonlinear evolution pdes,
  and beyond.
\newblock {\em Multiscale Modeling \& Simulation}, 15(1):606--660, 2017.

\bibitem{horn1991topics}
Roger Horn and Charles Johnson.
\newblock {\em Topics in matrix analysis}.
\newblock Cambridge University Press, 1991.

\end{thebibliography}
\newpage
\appendix
\section{Convergence Criteria for Semi-implicit Update}\label{appendixB}
The spectral radius of $T$ can be expresses as following
\begin{alignat*}
    \rho\rho(T) && \leq \left\|T\right\| & = \left\|G\sum_{i=1}^{N}\Lambda_i (\partial_i-d_i I)\right\|\nonumber\\
    & && \leq \left\|G\right\|\sum_{i=1}^{N}\left\|\Lambda_i\right\|\left\| (\partial_i-d_i I) \right\|;~[\mbox{ Invoking norm inequalities\cite{horn1991topics}}] \nonumber \\
    & && = \sum_{i=1}^{N}\left\|\Lambda_i\right\|\left\| (\partial_i-d_i I) \right\| ; ~[\|G\|=1] \nonumber
\end{alignat*}
Given $\left\|\Lambda_i\right\|<\frac{1}{\sum_{j=1}^{N}\left\| (\partial_j-d_j I) \right\|};~\forall i=1:N$, we have $\rho(T)<1$.

\section{Proofs}
\label{appendixA}
\thmone*
\begin{proof}
The proof is similar to Theorem 2 of \cite{hsieh2018learning}. The spectral norm $\|\cdot\|$ is convex from the sub-additive property, and $T'$ is linear in $\{H_i\}_{i=1:N}$. 
To prove that it is open, observe that $\|\cdot\|$ is a continuous function, so $(T+G\sum_{i=1}^{N}\Lambda_i  H_i(T-I))$ is continuous in $\{H_i\}_{i=1:N}$. Given $\rho(T')$, the set of $\{H_i\}_{i=1:N}$ is the preimage under this continuous function of $(0, 1 - \zeta)$ for some $\zeta > 0$, and the inverse image of open set $(0, 1 - \zeta)$ must be open.
\end{proof}
\begin{restatable}[]{lemma}{lemmatwo}
\label{lemma2}
The upper bound of the spectral norm of $\Phi_{H}$ is independent of $\{\Theta_i\}_{i=1:N}, \delta x, \delta t$, and $\epsilon$ Given $\left\|\Lambda_i\right\|< \frac{1}{\sum_{j=1}^{N}\left\|\partial_j-d_j I \right\|},~\forall i=1:N$.
\end{restatable}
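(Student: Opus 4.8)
The plan is to bound the spectral norm of $\Phi_H$, which equals $\|T'\|$ with $T' = T + G\sum_{i=1}^N \Lambda_i H_i(T-I)$, purely in terms of quantities that do not involve $\{\Theta_i\}_{i=1:N}, \delta x, \delta t$, or $\epsilon$. The first observation is that these four parameters enter the iterator \emph{only} through the operators $\Lambda_i = \bigl(I - \delta t\epsilon\sum_j \tfrac{\Theta_j d_j}{\delta x^{p_j}}\bigr)^{-1}\tfrac{\delta t\epsilon\Theta_i}{\delta x^{p_i}}$; every other ingredient (namely $G$, the fixed differencing stencils $\partial_i - d_i I$, and the learned maps $H_i$) is held fixed. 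Hence it suffices to produce an upper bound on $\|T'\|$ in which each $\|\Lambda_i\|$ has been eliminated using the hypothesis.

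First I would apply the triangle inequality and submultiplicativity of the spectral norm, together with $\|G\| = 1$, to obtain
\begin{equation*}
\|T'\| \le \|T\| + \sum_{i=1}^N \|\Lambda_i\|\,\|H_i\|\,\|T-I\|.
\end{equation*}
Next, reusing the computation of Appendix \ref{appendixB}, I would bound $\|T\| \le \sum_{i=1}^N \|\Lambda_i\|\,\|\partial_i - d_i I\|$. Writing $S := \sum_{j=1}^N \|\partial_j - d_j I\|$ for the (parameter-free) constant built from the fixed stencils, the hypothesis $\|\Lambda_i\| < 1/S$ yields $\|T\| < \sum_i \|\partial_i - d_i I\|/S = 1$, and therefore $\|T - I\| \le \|T\| + 1 < 2$, both of which are independent of the PDE parameters.

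Substituting these estimates into the first display and again invoking $\|\Lambda_i\| < 1/S$, I would arrive at
\begin{equation*}
\|T'\| < 1 + \frac{2}{S}\sum_{i=1}^N \|H_i\|,
\end{equation*}
whose right-hand side depends only on the fixed stencils (through $S$) and the fixed learned operators $H_i$, and hence is independent of $\{\Theta_i\}_{i=1:N}, \delta x, \delta t$, and $\epsilon$, as claimed. I do not anticipate a genuine obstacle here; the only point requiring care is the treatment of the $\|T-I\|$ factor, which must itself be controlled by a parameter-free estimate (via $\|T\| < 1$) so that no residual dependence on the parameters leaks back in through that term. The conceptual content — and the reason the result feeds into Proposition \ref{prop1} — is precisely that the hypothesis on $\|\Lambda_i\|$ launders away all parameter dependence, so the very same bound applies verbatim to any parameter setting that satisfies it.
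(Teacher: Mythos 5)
Your proof is correct and proves the lemma as literally stated, but it takes a genuinely different — and coarser — route than the paper. The paper never bounds $\|T\|$ and $\|T-I\|$ separately: it first regroups
\begin{equation*}
T' \;=\; G\sum_{i=1}^{N}\Lambda_i\left(\partial_i - d_i I - H_i\right) \;+\; G\sum_{i=1}^{N}\left(\Lambda_i H_i\right)T,
\end{equation*}
and then applies the product/triangle inequalities with $\|G\|=1$ and $\|T\|<1$ to obtain $\|T'\| < \frac{1}{S}\sum_{i=1}^{N}\left(\left\|\partial_i - d_i I - H_i\right\| + \left\|H_i\right\|\right)$, where $S=\sum_{j=1}^{N}\|\partial_j-d_jI\|$ in your notation. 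Both that bound and yours, $\|T'\| < 1+\frac{2}{S}\sum_{i=1}^{N}\|H_i\|$, are parameter-free, so both establish the claimed independence from $\{\Theta_i\}_{i=1:N}, \delta x, \delta t, \epsilon$; the difference is tightness and structure. Since $\|\partial_i-d_iI-H_i\|\le\|\partial_i-d_iI\|+\|H_i\|$, the paper's bound is never larger than yours, and it equals $1$ for nontrivial choices such as $H_i=\alpha(\partial_i-d_iI)$, $\alpha\in[0,1]$, so together with the strict inequality it can still certify $\|T'\|<1$ for a nonzero learned iterator; your bound strictly exceeds $1$ as soon as any $H_i\neq 0$, so on its own it can never certify validity, which weakens the way the lemma feeds into Proposition \ref{prop1}. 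The paper's grouping also produces exactly the quantity $\|\partial_i-d_iI-H_i\|+\|H_i\|$ that Appendix \ref{appendixC} minimizes to obtain the hypersphere interpretation of the learned $H_i$; your decomposition discards that structure. What your route buys is simplicity — only the triangle inequality, the Appendix \ref{appendixB} estimate $\|T\|<1$, and $\|T-I\|<2$ are needed — and you correctly flag the one delicate point, namely that $\|T-I\|$ must itself be controlled by a parameter-free estimate so no dependence leaks back in.
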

\begin{proof}
Considering the spectral norm of $T'$ and invoking product and triangular inequality of norm, we obtain the following tight bound
\begin{align}
    \left\|T'\right\| &= \left\|G\sum_{i=1}^{N}\Lambda_i (\partial_i-d_i I-H_i) + G\sum_{i=1}^{N}(\Lambda_i  H_i)T \right\| \nonumber\\
    & \le \|G\|\sum_{i=1}^{N} \left\|\Lambda_i\right\| \left\|\partial_i-d_i I-H_i\right\| + \|G\|\sum_{i=1}^{N}\left\|\Lambda_i\right\|\left\| H_i\right\|\left\|T \right\| \nonumber \\
    & < \sum_{i=1}^{N} \left\|\Lambda_i\right\|\left( \left\|\partial_i-d_i I-H_i\right\| + \left\| H_i\right\|\right) ~[\|G\|=1, \|T\|<1]\nonumber
\end{align}
Given $\left\|\Lambda_i\right\|< \frac{1}{\sum_{j=1}^{N}\left\|\partial_j-d_j I \right\|},~\forall i=1:N$, we have
\begin{align}
    \left\|T'\right\| < \frac{1}{\sum_{j=1}^{N}\left\|\partial_j-d_j I \right\|}\sum_{i=1}^{N}\left( \left\|\partial_i-d_i I-H_i\right\| + \left\| H_i\right\|\right) \nonumber
\end{align}
\end{proof}
\propone*
\begin{proof}
From Theorem 1 of \cite{hsieh2018learning} and Lemma \ref{lemma1}, our iterator is valid if and only if $\rho\left(T'\right)<1$. From Lemma \ref{lemma2} the upper bound of spectral norm of iterator only depends on \{$\partial_i\}_{i=1:N}$ and $\{H_i\}_{i=1:N}$ given $\left\|\Lambda_i\right\|< \frac{1}{\sum_{j=1}^{N}\left\|\partial_j-d_j I \right\|},~\forall i=1:N$. Nonetheless, for any matrix spectral radius is upper bounded by its spectral norm. Thus, if the iterator is valid for some $u'_{0}, b', \{\Theta'_{i}\}_{i=1:N}, \delta x', \delta t'$, and $\epsilon'$, then it is valid for any feasible choice of $u_{0}, b, \{\Theta_{i}\}_{i=1:N}, \delta x, \delta t$, and $\epsilon$ satisfying the constraints.
\end{proof}
\section{Geometric Interpretation}
\label{appendixC}
Surprisingly, we find that the form of $\|T'\|$ has a special structure. As the denominator is constant the objective is to minimize $\left\|\partial_i-d_i I-H_i\right\| + \left\| H_i\right\|$ w.r.t. $\left\| H_i\right\|; \forall i=1:N$. Invoking triangular inequality of norm we have the lower-bound
$$\left\|\partial_i-d_i I-H_i\right\| + \left\| H_i\right\|\geq \left\|\partial_i-d_i I\right\|$$
Taking square of both side, we have
$$\mbox{Find } {H_i} \mbox{ such that }(\partial_i-d_i I)^\top H_i \leq \left\| H_i\right\|$$
When the equality holds the local optima is the surface of the hyper-sphere with center at $\frac{1}{2}(\partial_i-d_i I)$ with radius $\frac{1}{2}\left\|\partial_i-d_i I\right\|$. This leads us to believe that each $H_i$ explores an optimal discretization scheme near the manifold of $\partial_i$ by learning the sum of order rules as described in \cite{dong2017image}.
\end{document}